\documentclass[11pt]{article}

\usepackage[T1]{fontenc}
\usepackage[margin=1in]{geometry}
\usepackage{amsmath,amssymb,amsthm}
\usepackage{array,booktabs,longtable}
\usepackage{graphicx}
\usepackage{microtype}
\usepackage{seqsplit}
\usepackage{xurl}
\usepackage[colorlinks=true,linkcolor=blue!45!black,citecolor=blue!45!black,urlcolor=blue!45!black]{hyperref}
\usepackage{xcolor}

\newtheorem{theorem}{Theorem}[section]
\newtheorem{proposition}[theorem]{Proposition}
\newtheorem{lemma}[theorem]{Lemma}
\newtheorem{corollary}[theorem]{Corollary}

\newcommand{\Petersen}{\mathsf P}
\newcommand{\sha}[1]{\texttt{\seqsplit{#1}}}
\newcommand{\code}[1]{\texttt{#1}}

\title{A 112-Vertex Counterexample to the\\Petersen Coloring Conjecture}
\author{Bryce Putman\\
\small\href{https://orcid.org/0009-0006-3073-9190}{ORCID: 0009-0006-3073-9190}}
\date{August 8, 2026}

\begin{document}
\maketitle

\begin{abstract}
We give an explicit simple bridgeless cubic graph on 112 vertices with no
Petersen coloring, and hence no normal 5-edge-coloring.  The graph is
identified by the SHA-256 digest in Theorem~\ref{thm:main}.  It is assembled
from three copies of a four-pole \(L\) and a claw six-pole \(C\); in turn,
\(L\) is assembled from four copies of a four-pole \(F\) and one copy of
\(C\), where \(F\) is obtained from the Petersen graph by deleting the
endpoints of one edge.  We give direct SAT formulations for Petersen
colorings and normal 5-edge-colorings.  CaDiCaL 3.0.1 returned UNSAT
for both formulas, and \texttt{drat-trim} verified the resulting DRAT proofs.
The ancillary archive contains the construction, an explicit relabeling, the
encoders, certificates, hashes, and verification programs.  Combined with a
theorem of Ma, Mattiolo, Steffen, and Wolf~\cite{MaEtAl2025}, the counterexample
also implies that infinitely many connected simple bridgeless cubic graphs have
no Petersen coloring.  We also give a separately verified, nonisomorphic
\(D_3\)-symmetric 112-vertex counterexample.  We do not address whether 112 is
minimum.
\end{abstract}

\section{Introduction and context}

For cubic graphs \(G\) and \(H\), an \(H\)-coloring of \(G\) is a map
\(\varphi:E(G)\to E(H)\) such that at every vertex of \(G\), the three
incident edges map bijectively to the three edges incident with some vertex of
\(H\).  Jaeger's Petersen Coloring Conjecture asserts that every bridgeless
cubic graph has such a coloring when \(H\) is the Petersen graph
\(\Petersen\)~\cite{Jaeger1988}.

Before the present counterexample, exhaustive generation had verified the
conjecture for every snark of order at most 36~\cite{BrinkmannEtAl2013}.
Petersen colorings were also established for several structured snark
families~\cite{HaggSteffen2014,FerrariniEtAl2020}.  Broader normal-edge-coloring
theory and variants of the conjecture appear in
\cite{MazzuoccoloMkrtchyan2020,PirotEtAl2020}, while partial-normality and
approximation results appear in \cite{JinKang2021,MattioloEtAl2021}.
Recent work studies additional superposition constructions
\cite{SedlarSkrekovski2024a,SedlarSkrekovski2024b,SedlarSkrekovski2023,ZhouEtAl2026}.

The contribution of this paper is an explicit counterexample together with a
short construction and directly checkable certificates.  The main theorem is
proved from the explicit graph and the verification described below.  Together
with recent work on universal target families~\cite{MaEtAl2025}, it also yields
infinitely many connected simple bridgeless cubic counterexamples
(Corollary~\ref{cor:infinitely-many}).

\begin{theorem}[Main theorem]\label{thm:main}
There is a simple connected bridgeless cubic graph \(G\) with
\[
  |V(G)|=112,\qquad |E(G)|=168,
\]
that has no Petersen coloring and no normal 5-edge-coloring.  Moreover,
\(G\) has girth five, edge-connectivity three, and vertex-connectivity three.
The SHA-256 digest of its normalized sorted edge list is
\[
\sha{dc16cc18600cf77c8661b7baf89c7019f265299308541961ff884ea7187b4e8b}.
\]
\end{theorem}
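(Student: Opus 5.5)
The proof is a construction followed by certified computation, so the plan has three parts. First, build \(G\) explicitly. Let \(F\) be the four-pole obtained from the Petersen graph \(\Petersen\) by deleting the two endpoints of one edge. This leaves 8 vertices and four dangling edges, since each deleted endpoint had two other neighbours. Let \(C\) be the claw six-pole. Form the four-pole \(L\) by joining four copies of \(F\) and one copy of \(C\) along a fixed pattern of dangling edges. Then close three copies of \(L\) with one further copy of \(C\) to obtain a cubic graph.

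Next, check the elementary invariants directly from the resulting edge list. These are: the graph is 3-regular and simple, \(|V(G)|=112\) and \(|E(G)|=168\) (as forced by \(3\cdot 112/2\)), it is connected, and it is bridgeless. We also verify girth five by a BFS from each vertex, and compute edge- and vertex-connectivity 3 by max-flow (Menger) between all relevant pairs. Finally, we normalize the edge list (sorted pairs, sorted list), hash it, and compare with the stated SHA-256 digest. All of this is routine and deterministic.

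The substantive part is nonexistence of a Petersen coloring. We encode it as SAT with a Boolean variable \(x_{e,f}\) for each \(e\in E(G)\) and \(f\in E(\Petersen)\), together with the following constraints:
\begin{itemize}
\item exactly one \(f\) for each \(e\);
\item for each vertex \(v\) of \(G\), an auxiliary selector \(y_{v,w}\) choosing a vertex \(w\) of \(\Petersen\), with exactly one \(w\) chosen;
\item \(y_{v,w}\) forces the three edges at \(v\) to map bijectively onto the three edges at \(w\).
\end{itemize}
Symmetry breaking is sound because \(\mathrm{Aut}(\Petersen)\) is edge-transitive: we may fix the color of one edge. For the normal 5-edge-coloring we use the standard equivalence (Jaeger) that \(G\) has a Petersen coloring iff it has a normal 5-edge-coloring, so the second formula is logically redundant. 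We still encode it directly as an independent cross-check. Its variables are the edge colors in \(\{1,\dots,5\}\), with the constraints:
\begin{itemize}
\item proper coloring at each vertex;
\item for each edge \(uv\), the set of colors on the four neighbouring edges has size 3 or 5 (poor or rich), expressed by clauses over the color multiset.
\end{itemize}

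Running a solver and then checking the DRAT proof with \texttt{drat-trim} certifies unsatisfiability of both formulas. To keep trust low, the encoders should be small and auditable, and the correctness of each encoding should be argued on paper. Each satisfying assignment must correspond exactly to a coloring, and vice versa.

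The main obstacle is making UNSAT actually tractable and believable. A bare 168-edge instance with 15 colors may be hard for the solver. The construction is chosen to help: we first argue, or check by small SAT/brute-force computations, that the set of boundary color patterns realizable by \(F\), and hence by \(L\), is restricted. This explains why the assembly forces a contradiction and guides symmetry breaking. The theorem itself rests only on the global certified UNSAT proof, so this boundary analysis serves as heuristic support and is not part of the logical argument.
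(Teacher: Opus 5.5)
Your plan follows the paper's route. You build the explicit graph from copies of $F$ and the claw $C$, check the structural invariants and the digest deterministically, and certify that no Petersen coloring exists via a direct CNF (vertex selectors, edge-image variables, star-bijection clauses, a symmetry-breaking unit), refuted by CaDiCaL and checked by \texttt{drat-trim}. Fixing one edge via edge-transitivity is weaker than the paper's symmetry break, which fixes a whole ordered star using that $S_5$ is transitive on ordered stars, but it is equally sound. Deriving the normal-coloring clause from Jaeger's equivalence is also legitimate, since the paper proves that equivalence (Proposition~\ref{prop:equivalence}); the paper additionally certifies that clause directly with a second checked DRAT proof.

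The concrete error is in your normal-coloring encoding. In a proper coloring, the color of $uv$ does not appear on the four neighbouring edges. So the five edges at $u$ and $v$ carry $1+k$ colors, where $k\in\{2,3,4\}$ is the number of colors on the four neighbours. Poor means $k=2$, rich means $k=4$, $k=3$ is exactly the non-normal case, and $k=5$ is impossible. Your condition \emph{size $3$ or $5$ on the four neighbouring edges} therefore collapses to $k=3$ at every edge, i.e.\ it demands that every edge be non-normal. An UNSAT result for that formula says nothing about normal colorings. Your proof of the theorem survives only because you obtain the normal-coloring claim from Jaeger's equivalence; as an independent cross-check, the second formula would be void. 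Either require $2$ or $4$ colors on the four neighbours (equivalently $3$ or $5$ on all five edges), or use the paper's more local formulation: a missing-pair variable $M_{v,A}$ per vertex, incident edge colors avoiding $A$, and adjacent missing pairs required to be equal (poor) or disjoint (rich).

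A smaller point concerns the construction and the hash. The digest identifies one exact labeled edge list in one exact serialization, and the choice of joins is the whole content of the counterexample. So \emph{a fixed pattern of dangling edges} must be the specific joins of Tables~\ref{tab:Ljoins} and~\ref{tab:Gjoins}, followed by the relabeling $\pi$. Sorting the pairs does not canonicalize vertex labels, so hashing the builder's native labeling would not reproduce the stated digest.
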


The normalized serialization is the ASCII JSON array in which every edge is
written as \([u,v]\) with \(u<v\), the pairs are lexicographically sorted,
the separators are a comma and a colon, and no optional whitespace is used.
Thus every implementation hashes the same byte sequence.  The digest binds
the edge list in Appendix~\ref{app:edges}, the output of the compact recursive
builder, the input to both SAT encoders, and the graph in the archived release.
It identifies that exact labeled edge set.

\section{Petersen-coloring and normal 5-edge-coloring formulations}\label{sec:equivalence}

We use the Kneser realization \(\Petersen=KG(5,2)\).  Its vertices are the
two-element subsets of \([5]=\{0,1,2,3,4\}\), and two vertices are adjacent
when they are disjoint.  For a target edge \(XY\), define
\[
  \kappa(XY)=[5]\setminus(X\cup Y),
\]
identified with its unique element.  The three target edges at \(X\) receive
the three colors in \([5]\setminus X\).

A proper 5-edge-coloring of a cubic graph is \emph{normal} if each edge is
either poor, with three colors on its two endpoint stars, or rich, with five.

Jaeger proved the following equivalence~\cite{Jaeger1985}; we give it in an
explicit bijective form adapted to the Kneser notation.

\begin{proposition}[Jaeger's equivalence]\label{prop:equivalence}
For every finite loopless cubic multigraph, Petersen colorings are in
bijection with normal 5-edge-colorings.
\end{proposition}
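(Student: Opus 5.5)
The bijection I will use is \(\varphi\mapsto\kappa\circ\varphi\), and the inverse will reconstruct \(\varphi\) edge by edge from the color stars at the two ends. First I check that \(c=\kappa\circ\varphi\) is a normal 5-edge-coloring. At a vertex \(v\) of \(G\), \(\varphi\) sends the three incident edges bijectively onto the three target edges at some vertex \(X_v\) of \(\Petersen\). Those edges carry the three distinct colors in \([5]\setminus X_v\), so \(c\) is proper at \(v\), and the color set at \(v\) is \(S_v=[5]\setminus X_v\).

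Now take an edge \(e=uv\) with \(\varphi(e)=XY\). The vertex \(X_u\) is an endpoint of \(XY\), and so is \(X_v\). If \(X_u=X_v\), then \(S_u=S_v\) and \(e\) is poor. If \(X_u\neq X_v\), then \(\{X_u,X_v\}=\{X,Y\}\) are disjoint, so \(S_u\cup S_v=[5]\setminus(X\cap Y)=[5]\) and \(e\) is rich. Loops are excluded by hypothesis. Parallel edges cause no trouble because every condition is local to one edge and its endpoint stars.

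Conversely, let \(c\) be a normal 5-edge-coloring. Set \(X_v=[5]\setminus S_v\), where \(S_v\) is the set of three colors at \(v\). For an edge \(e=uv\) of color \(i\), both \(X_u\) and \(X_v\) avoid \(i\). If \(e\) is rich, then \(S_u\cup S_v=[5]\), so \(X_u\cap X_v=\emptyset\), and I define \(\varphi(e)=X_uX_v\); this is an edge of \(\Petersen\) with \(\kappa(\varphi(e))=i\). If \(e\) is poor, then \(X_u=X_v=X\), and I define \(\varphi(e)\) to be the unique target edge at \(X\) with color \(i\). That edge is \(X\,([5]\setminus(X\cup\{i\}))\), which is well defined because \(i\notin X\).

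In both cases \(\varphi(e)\) is the unique edge at \(X_u\) of color \(c(e)\), and likewise at \(X_v\). So at each vertex \(v\), \(\varphi\) maps the three incident edges, which have three distinct colors, onto the three distinct edges at \(X_v\). Hence \(\varphi\) is a Petersen coloring.

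It remains to show the two maps are mutually inverse. Starting from \(c\), the construction gives \(\kappa\circ\varphi=c\) by design. Starting from \(\varphi\), the vertex attached to \(v\) by the reverse construction is \([5]\setminus S_v\), which is exactly the original \(X_v\). Then \(\varphi(e)\) is recovered as the unique edge at \(X_v\) of color \(\kappa(\varphi(e))\).

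The one nontrivial point is that the \(X_v\) in the definition of a Petersen coloring is determined by \(\varphi\). This holds because the images of the three edges at \(v\) pairwise meet only in their common endpoint, so that endpoint is forced. With this in hand, every color at \(v\) identifies a single target edge at \(X_v\), and the bijection is proved.
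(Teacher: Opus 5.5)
Your proposal is correct and matches the paper's proof step for step. The forward map is \(\varphi\mapsto\kappa\circ\varphi\), with poor or rich according to whether \(X_u=X_v\). The inverse sets \(X_v=[5]\setminus S_v\), sends poor edges to the unique target edge at \(X_u\) of color \(i\), and sends rich edges to \(X_uX_v\). Your additional checks only make explicit details the paper leaves implicit: that \(\kappa(X_uX_v)=i\) on rich edges, that \(X_v\) is determined by \(\varphi\), and that parallel edges cause no trouble.
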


\begin{proof}
Given a Petersen coloring \(\varphi\), let \(X_v\) be the target vertex whose
star is the image of the star at \(v\), and color \(e\) by
\(\kappa(\varphi(e))\).  The coloring is proper.  For \(e=uv\), either
\(X_u=X_v\), giving equal endpoint palettes and a poor edge, or \(X_u\) and
\(X_v\) are the endpoints of \(\varphi(e)\), giving a rich edge.

Conversely, from a normal 5-edge-coloring \(c\), let \(S_v\) be the three
colors at \(v\) and put \(X_v=[5]\setminus S_v\).  For a poor edge \(uv\) of
color \(i\), the endpoint palettes, hence \(X_u\) and \(X_v\), are equal; map
the edge to the unique target edge at \(X_u\) whose \(\kappa\)-color is \(i\).
For a rich edge, \(S_u\cup S_v=[5]\) and \(S_u\cap S_v=\{i\}\), so
\(X_u\cap X_v=\varnothing\); map it to \(X_uX_v\).  Each source star maps
bijectively to the corresponding target star, and the constructions are
inverse.
\end{proof}

For additional verification, we certify both formulations directly.  The two
computational checks use separate implementations.

\section{A compact recursive construction}\label{sec:construction}

Figure~\ref{fig:hierarchy} summarizes the constructions \(L=4F+C\) and
\(G=3L+C\); the precise joins are specified below.  Following standard
multipole terminology~\cite{FiolVilaltella2015}, we call the dangling
half-edges \emph{semiedges}.  The marked points in the figures represent
distinct semiedges, even when several are incident with the same vertex.

\begin{figure}[ht]
\centering
\includegraphics[width=0.97\textwidth]{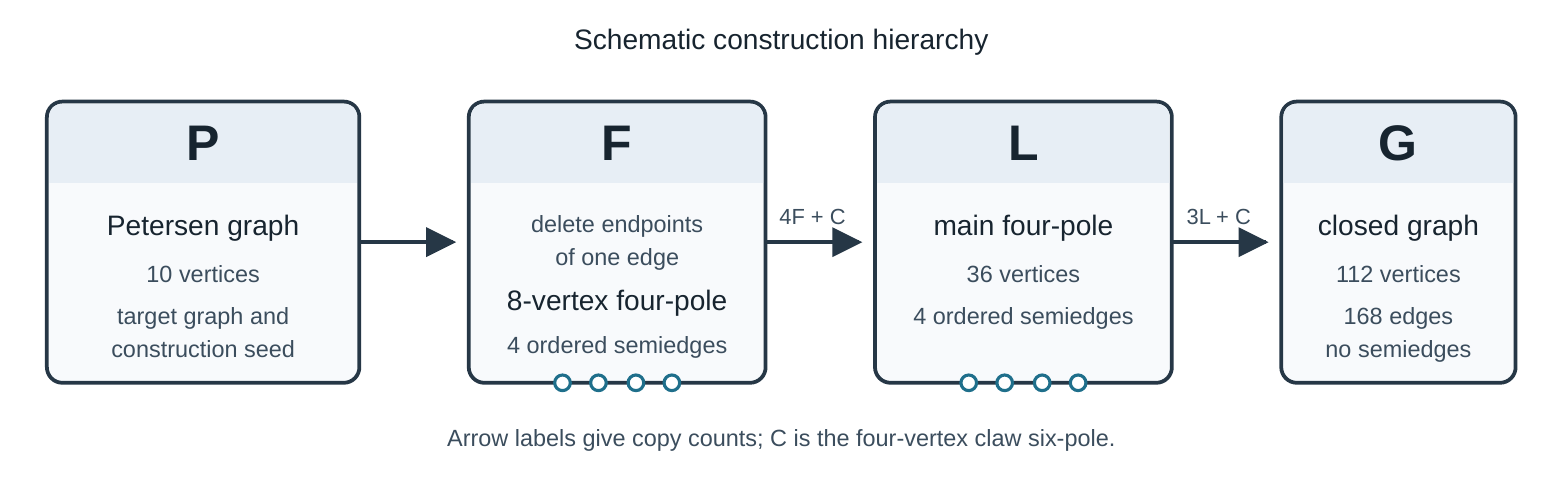}
\caption{Schematic hierarchy \(\Petersen\to F\to L\to G\).  The formulas
give copy counts; the semiedge joins are listed below.}
\label{fig:hierarchy}
\end{figure}

The semiedges of each multipole are ordered.  If \(M\) is a named copy, then
\(M.j\) denotes its semiedge with index \(j\).  Joining two semiedges produces
an edge between their incident vertices.

Let \(F\) be the four-pole obtained from the Petersen graph by deleting the
endpoints of one edge.  In the labeling used here, its internal edge set is
\[
E(F)=\bigl\{\{0,1\},\{0,5\},\{1,2\},\{1,6\},\{2,7\},
\{3,5\},\{3,6\},\{4,6\},\{4,7\},\{5,7\}\bigr\}.
\]
Its four semiedges are incident, in order, with vertices \((0,2,3,4)\).
The verification script checks this identification, including the stated
semiedge order, by an explicit bijection.

Let \(C\) be the claw six-pole
\[
 E(C)=\bigl\{\{0,1\},\{0,2\},\{0,3\}\bigr\}.
\]
Its six semiedges are incident, in order, with vertices
\((3,1,1,2,2,3)\).

To form \(L\), take four copies \(F_0,F_1,F_2,F_3\) and one copy of \(C\),
and make the nine joins in Table~\ref{tab:Ljoins}.  The four unjoined
semiedges, in order, are
\[
 (F_0.1,F_1.3,F_2.3,F_3.3).
\]

\begin{table}[ht]
\centering
\caption{The nine joins defining \(L=4F+C\).}
\label{tab:Ljoins}
\begin{tabular}{lll}
\toprule
\(F_0.0\!\sim\!F_3.2\) & \(F_0.2\!\sim\!C.0\) & \(F_0.3\!\sim\!C.1\)\\
\(F_1.1\!\sim\!C.2\) & \(F_1.0\!\sim\!C.3\) & \(F_2.0\!\sim\!C.4\)\\
\(F_3.0\!\sim\!C.5\) & \(F_1.2\!\sim\!F_2.1\) & \(F_2.2\!\sim\!F_3.1\)\\
\bottomrule
\end{tabular}
\end{table}

Finally take three copies \(L_0,L_1,L_2\) and another copy \(K\) of \(C\),
and join their remaining semiedges as in Table~\ref{tab:Gjoins}.  The resulting
graph has \(3\cdot36+4=112\) vertices.

\begin{figure}[ht]
\centering
\includegraphics[width=0.96\textwidth]{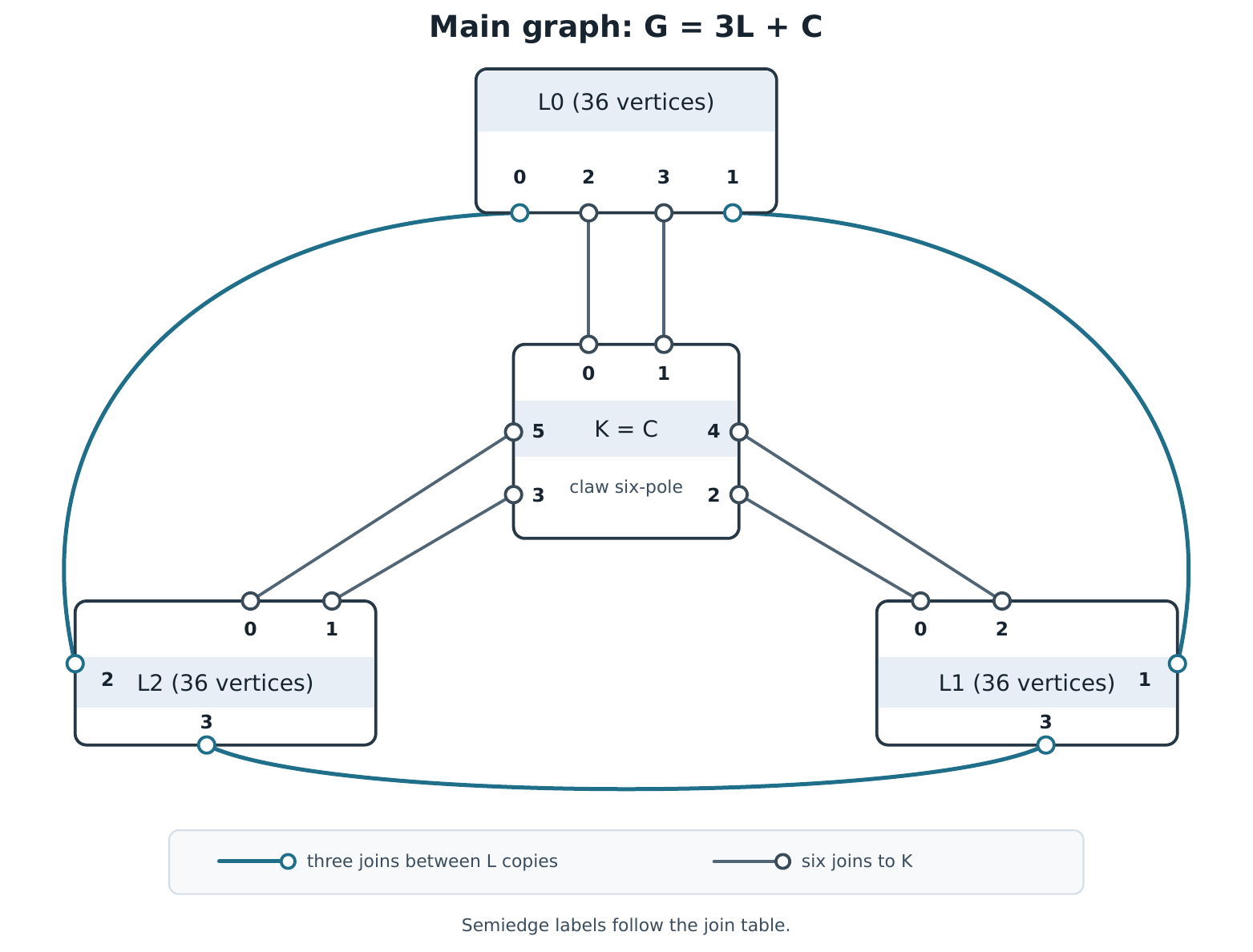}
\caption{Schematic macro-construction of \(G\); Table~\ref{tab:Gjoins}
specifies the nine joins.}
\label{fig:mainmacro}
\end{figure}

\begin{table}[ht]
\centering
\caption{The nine joins defining \(G=3L+C\).}
\label{tab:Gjoins}
\begin{tabular}{lll}
\toprule
\(L_0.0\!\sim\!L_2.2\) & \(L_0.1\!\sim\!L_1.1\) & \(L_0.2\!\sim\!K.0\)\\
\(L_0.3\!\sim\!K.1\) & \(L_1.0\!\sim\!K.2\) & \(L_1.2\!\sim\!K.4\)\\
\(L_2.1\!\sim\!K.3\) & \(L_2.0\!\sim\!K.5\) & \(L_1.3\!\sim\!L_2.3\)\\
\bottomrule
\end{tabular}
\end{table}

For the construction labeling, number the four \(F\)-copies and the \(C\)-copy
inside each \(L\) consecutively with offsets \(0,8,16,24,32\); use offsets
\(0,36,72\) for the three copies of \(L\), and offset 108 for \(K\).  The
following permutation \(\pi\) maps the construction labeling to the fixed
reference labeling:
\[
\pi(x)=
\begin{cases}
x,&0\le x\le7,\\
x+4,&8\le x\le31,\\
x-24,&32\le x\le35,\\
x+4,&36\le x\le43,\\
x+8,&44\le x\le67,\\
x-20,&68\le x\le71,\\
x+4,&72\le x\le79,\\
x+8,&80\le x\le103,\\
x-20,&104\le x\le107,\\
x-72,&108\le x\le111.
\end{cases}
\]

\begin{lemma}[Construction identity and structure]\label{lem:structure}
The preceding builder produces 168 distinct nonloop edges.  Applying \(\pi\)
gives exactly the edge set in Appendix~\ref{app:edges}, whose normalized digest
is the one in Theorem~\ref{thm:main}.  The graph is connected, cubic and
bridgeless, has girth five, and has edge- and vertex-connectivity three.
\end{lemma}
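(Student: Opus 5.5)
The lemma mixes two kinds of claims, and I would handle them differently.

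The first kind is finite identity checks. These are that the builder yields 168 distinct nonloop edges, that \(\pi\) carries the construction labeling to the edge set of Appendix~\ref{app:edges}, and that the normalized serialization hashes to the stated digest. I would verify them mechanically, and only sanity-check them by hand. The count is immediate. Each \(L\) has \(4\cdot10+3+9=52\) edges, so the three copies give 156 edges. Adding the 3 edges of \(K\) and the 9 joins of Table~\ref{tab:Gjoins} gives 168. Since \(2\cdot168=3\cdot112\), this is consistent with cubicity.

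Degrees can be checked locally. In \(F\), vertices \(0,2,3,4\) have internal degree \(2\) and each carries one semiedge; the other vertices have internal degree \(3\). In \(C\), the center has degree \(3\), and each leaf has internal degree \(1\) plus two semiedges. Every semiedge is used exactly once in Tables~\ref{tab:Ljoins} and~\ref{tab:Gjoins}, so \(G\) is cubic.

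No loops arise because every join connects different copies, or a copy to \(C\) or \(K\). Distinctness, meaning no parallel edges, needs the one subtle check. The two semiedges of a leaf of \(C\) must go to different vertices. In \(L\), leaf \(1\) receives \(F_0.0\)-side semiedges? No: \(C.1,C.2\) go to \(F_0.3\) and \(F_1.1\), which are different copies. Similarly \(C.3,C.4\) go to \(F_1,F_2\), and \(C.0,C.5\) go to \(F_0,F_3\). For \(K\), the leaf pairs \((K.1,K.2)\), \(\{K.3,K.4\}\) and \(\{K.0,K.5\}\) go to distinct \(L\)-copies, namely \((L_0,L_1)\), \((L_2,L_1)\) and \((L_0,L_2)\). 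For those pairs I would then confirm that the endpoints differ.

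Applying \(\pi\) and hashing the serialized list is then a direct computation against the appendix.

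The second kind is structural properties, which I would prove semi-structurally. Connectivity follows because \(F\) is connected, \(C\) connects the four \(F\)-copies inside \(L\), and \(K\) together with the \(L\)-joins connects the three \(L\)'s. Bridgelessness and 3-edge-connectivity I would obtain by exhibiting, for each pair of vertices, three edge-disjoint paths. A cleaner route is to argue at the multipole level. \(F\) is cyclically 4-edge-connected, being the Petersen graph minus an edge's endpoints. Then I would show that every edge cut of size at most \(2\) must either lie inside one copy of \(F\), which is impossible since each side of such a cut would contain semiedges, or else separate copies in the quotient multigraph whose vertices are the copies. For that quotient multigraph I would check 3-edge-connectivity by hand; it has only 5 nodes at each level.

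Vertex-connectivity \(3\) follows from cubicity plus 3-edge-connectivity, since for cubic graphs \(\kappa=\lambda\). The value is exactly \(3\) because the degree is \(3\).

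For girth five, cycles inside a copy of \(F\) have length at least \(5\), since \(F\) is a subgraph of Petersen. Any other cycle must pass through \(C\), \(K\) or several copies, and each traversal of an \(F\)-copy between two distinct semiedge vertices costs at least one internal edge. The semiedge vertices \(0,2,3,4\) are pairwise nonadjacent in \(F\): the edges among them would be \(\{0,2\},\{0,3\},\ldots\), and none of these are listed. So each traversal costs at least \(2\) internal edges, and cycles leaving a copy have length at least \(5\) after counting the joins and claw edges. A 5-cycle exists inside \(F\), for example \(1,2,7,5,0\).

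The main obstacle is making the cut and girth arguments airtight without case explosion. In practice I would rely on the archived verification script, which runs BFS for girth and max-flow for connectivity, as the primary proof, and present the structural reasoning above only as a corroborating sketch.
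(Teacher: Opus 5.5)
Your primary argument is the paper's. Every claim of the lemma is discharged by the archived script, which rebuilds \(F\), \(L\) and \(G\), checks \(\pi\), compares edge by edge with the appendix, and hashes; it then checks simplicity, degrees, connectivity, girth by BFS from every root, and connectivity three. The paper does the last step by exhaustively deleting at most two edges and at most two vertices rather than by max-flow, and your appeal to \(\kappa=\lambda\) for cubic graphs is an equally valid replacement for the vertex check.

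Do not present your multipole cut sketch as corroboration in its current form, though:
\begin{itemize}
\item A claw cannot be contracted to a node, since cutting one claw edge separates a leaf and its two semiedges at the cost of a single internal edge.
\item The ``inside one copy of \(F\)'' versus ``quotient'' dichotomy is not exhaustive.
\item The top-level quotient has four nodes, not five.
\item What \(F\) must satisfy is \(|\delta_F(S)|\ge\min(s_S,4-s_S)\), where \(s_S\) is the number of semiedges at \(S\). This holds because \(F\) is connected and bridgeless. Cyclic \(4\)-edge-connectivity says nothing here, since \(F\) has no two disjoint cycles.
\end{itemize}
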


\begin{proof}
The script \code{verify\_main\_graph.py} reconstructs
\(KG(5,2)\), verifies the stated deletion model for \(F\), runs
the hierarchical builder, checks that \(\pi\) is a bijection, and compares the
mapped edge set edge by edge with the supplied reference edge set before hashing
it.  It then checks simplicity, degrees, connectivity and bridges.  Girth is
computed by breadth-first search from every root.  Finally, the script tests
every removal of fewer than three edges and every removal of fewer than three
vertices; none disconnects the graph.  Since the graph is cubic, its edge- and
vertex-connectivities are at most three, so both equal three.  The completed
run is recorded in \code{main\_graph\_verification.json}.
\end{proof}

\section{Two direct SAT encodings}\label{sec:sat}

Both encodings depend only on the explicit 168-edge list.  Before generating
either formula, the program checks the vertex set, simplicity, and degree
sequence.

\subsection{Direct Petersen encoding}

Using the notation in the archived variable maps, let \(P_{v,a}\) mean that
source vertex \(v\) chooses target vertex
\(a\in V(\Petersen)\), and let \(Q_{e,b}\) mean that source edge \(e\) maps to
target edge \(b\in E(\Petersen)\).  Standard exactly-one constraints select one
\(a\) for each \(v\) and one \(b\) for each \(e\).  Incidence clauses forbid
\(Q_{e,b}\) whenever \(e\) is incident with \(v\) but \(b\) is not incident
with the selected \(a\).  For each source star, pairwise clauses make its three
target-edge labels distinct.  Therefore they are exactly the star at the
selected target vertex.

The automorphism group of \(\Petersen\) induced by \(S_5\) is transitive on
ordered stars.  Using this symmetry, four unit clauses fix vertex 0 and its
deterministically ordered incident edges to one such star.

\subsection{Direct normal 5-edge-coloring encoding}

Let \(Y_{e,c}\) select one of five colors for source edge \(e\), and let
\(M_{v,A}\) select one of the ten 2-subsets \(A\subset[5]\) as the colors
missing at \(v\).  Standard exactly-one constraints select one \(c\) for each
\(e\) and one \(A\) for each \(v\).
Clauses make incident edge colors avoid \(A\), and properness clauses make the
three colors at a source vertex distinct; hence they are exactly
\([5]\setminus A\).  Across \(uv\), clauses allow missing pairs \(A,B\) only
when \(A=B\) or \(A\cap B=\varnothing\).  These are exactly the poor and rich
cases.  Using color-permutation symmetry, four unit clauses fix the ordered
colors at vertex 0.

\begin{lemma}[Encoding semantics]\label{lem:semantics}
The Petersen CNF is satisfiable exactly when the input graph has a Petersen
coloring.  The normal 5-edge-coloring CNF is satisfiable exactly when the input
graph has a normal 5-edge-coloring.  The four symmetry units in each
formula preserve satisfiability.
\end{lemma}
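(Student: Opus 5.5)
We prove each semantic claim as two directions, an explicit translation between satisfying assignments and colorings, and then handle the symmetry units separately.

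\emph{Petersen CNF.} Given a Petersen coloring \(\varphi\), let \(X_v\) be the target vertex whose star is \(\varphi(\partial v)\). Set \(P_{v,a}\) true iff \(a=X_v\), and \(Q_{e,b}\) true iff \(b=\varphi(e)\). The exactly-one constraints hold trivially. Each incidence clause is satisfied because \(\varphi(e)\) lies in the star of \(X_v\) for both ends \(v\) of \(e\). The distinctness clauses hold because \(\varphi\) is bijective on each star.

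Conversely, a satisfying assignment defines a single-valued \(a(v)\) and \(\varphi(e)\). The incidence clauses force \(\varphi(\partial v)\) to lie inside the three-edge star of \(a(v)\), and distinctness of the three labels makes this a bijection onto that star. Hence \(\varphi\) is a Petersen coloring. Note that \(a(v)\) need not be recorded in the coloring: it is determined by \(\varphi\), since distinct Petersen vertices have distinct stars.

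\emph{Normal CNF.} Given a normal coloring \(c\), set \(Y_{e,c(e)}\) true and \(M_{v,A}\) true for \(A=[5]\setminus S_v\), where \(S_v\) is the set of colors at \(v\). Avoidance and properness hold by construction. For an edge \(uv\), normality means \(S_u=S_v\) (poor edge) or \(|S_u\cup S_v|=5\) (rich edge). Since \(|S_u\cap S_v|\ge1\) and both sets have size three, the rich case is equivalent to \(S_u\cap S_v\) being a single color, i.e.\ \(A\cap B=\varnothing\). So the pair constraint holds.

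Conversely, avoidance plus properness force the colors at \(v\) to be exactly \([5]\setminus A_v\). The pair constraint then gives, for each edge, either equal palettes (poor) or disjoint missing pairs, which means union \([5]\) (rich). So the coloring is proper and normal. This mirrors the proof of Proposition~\ref{prop:equivalence}.

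\emph{Symmetry units.} This is the only nonroutine step.

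For the Petersen formula, suppose a coloring \(\varphi\) exists. Let \((b_1,b_2,b_3)\) be the images of the deterministically ordered edges at vertex 0, which form an ordered star at \(X_0\). By transitivity of \(S_5\) on ordered stars, some automorphism \(\sigma\) maps \((X_0;b_1,b_2,b_3)\) to the fixed \((a^*;b^*_1,b^*_2,b^*_3)\). Then \(\sigma\circ\varphi\) is again a Petersen coloring and satisfies the four units.

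The transitivity claim itself I would verify directly. \(S_5\) acts transitively on the 10 two-subsets. The stabilizer of \(\{0,1\}\) contains \(S_{\{2,3,4\}}\), which permutes the three incident edges, indexed by \(\kappa\)-color in \(\{2,3,4\}\), as the full \(S_3\). This gives \(10\cdot 6=60\) ordered stars, all in one orbit.

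For the normal formula, any color permutation \(\tau\in S_5\) preserves properness and the poor/rich classification. Choosing \(\tau\) to send the three ordered colors at vertex 0 to the prescribed ones gives a normal coloring satisfying those units.

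The fourth unit in that formula, presumably on \(M_{0,A}\), is implied by the three color units together with the avoidance and exactly-one clauses. I would check exactly which units the encoder emits and confirm they are consistent, which is the main thing to pin down. In both cases, adding clauses satisfied by some model preserves satisfiability, and the converse direction is immediate.
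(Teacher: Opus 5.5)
Your proposal is correct and follows essentially the paper's route: a two-way translation between satisfying assignments and colorings via the variable interpretations, with the symmetry units justified by transitivity of \(S_5\) on ordered stars and by color permutations, as described in Section~\ref{sec:sat}. The only difference is that you prove the local facts by hand (the \(60\) ordered stars forming one orbit, and poor/rich \(\Leftrightarrow\) \(A=B\) or \(A\cap B=\varnothing\)), whereas the paper backs its one-line argument with the exhaustive check in \code{audit\_encoding\_semantics.py}; your reading of the fourth normal-coloring unit as the forced \(M_{0,A}\) literal matches the paper's statement that the units fix the ordered colors at vertex 0.
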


\begin{proof}
The preceding variable interpretations translate any satisfying assignment
to the stated coloring and conversely.  As a separate check,
\code{audit\_encoding\_semantics.py} exhausts every local target-star labeling,
every local palette labeling, every pair of endpoint missing pairs, and all
120 permutations of the five colors.  It also parses every raw and
symmetry-broken DIMACS file, checks the header counts and terminating zeros,
rejects out-of-range variables, repeated literals, and complementary literal
pairs within a clause, and compares the file hashes with the manifests.
\end{proof}

\section{Direct certificate results}\label{sec:certificates}

The certificate runs used CaDiCaL 3.0.1~\cite{BiereEtAl2024} at commit
\sha{c60730422e758ef1cebe7aeddf2dda31c996bf04} and \texttt{drat-trim} at
commit \sha{2e3b2dc0ecf938addbd779d42877b6ed69d9a985}~\cite{WetzlerEtAl2014}.
Table~\ref{tab:maincerts}
records the regenerated formula sizes and verification results.  Exact
hashes for the CNFs, DRAT certificates, maps, scripts, and logs are supplied
in the ancillary manifests.

\begin{center}
\refstepcounter{table}\label{tab:maincerts}
\textbf{Table \thetable: Direct SAT and certificate results for the main graph.}\par\smallskip
\footnotesize
\begin{tabular}{@{}lrrll@{}}
\toprule
Encoding & Variables & Clauses & Solver result & Proof check\\
\midrule
Petersen & 3640 & 68324 & \code{UNSATISFIABLE} & \code{VERIFIED}\\
Normal 5-edge-coloring & 1960 & 25484 & \code{UNSATISFIABLE} & \code{VERIFIED}\\
\bottomrule
\end{tabular}
\end{center}

CaDiCaL returned exit code 20 and the exact line
\code{s UNSATISFIABLE} for both formulas.
The checker returned exit code 0 and the exact normalized line
\code{s VERIFIED} for both proofs.

\begin{proof}[Proof of Theorem~\ref{thm:main}]
Lemma~\ref{lem:structure} identifies the explicit graph and establishes its
structural properties.  By Lemma~\ref{lem:semantics}, a Petersen coloring
would satisfy the Petersen CNF.  The checked DRAT derivation certifies that
this CNF is unsatisfiable, so no Petersen coloring exists.  The second checked
derivation separately establishes that the direct normal 5-edge-coloring CNF is
unsatisfiable.  This proves every assertion of the theorem.
\end{proof}

Following Ma, Mattiolo, Steffen, and Wolf~\cite{MaEtAl2025}, call a cubic
graph a \emph{3-graph} if every odd vertex set has at least three edges to its
complement.  A set of connected 3-graphs is \emph{3-complete} if every
connected 3-graph is colored by some member of the set; they denote the unique
inclusion-wise minimal such set by \(\mathcal H_3\).

\begin{corollary}[Infinitely many counterexamples]\label{cor:infinitely-many}
There are infinitely many connected simple bridgeless cubic graphs with no
Petersen coloring.  Moreover, the family \(\mathcal H_3\) is infinite.
\end{corollary}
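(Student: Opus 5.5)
We derive the corollary from Theorem~\ref{thm:main} together with the structure theorem for \(\mathcal H_3\) of Ma, Mattiolo, Steffen, and Wolf~\cite{MaEtAl2025}. The plan proceeds in three steps.

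\emph{Step 1: \(G\) is a 3-graph.} By Lemma~\ref{lem:structure}, \(G\) is cubic with edge-connectivity three. For an odd set \(S\), the parity count \(3|S| = 2|E(S)| + |\partial S|\) shows that \(|\partial S|\) is odd. Edge-connectivity three then gives \(|\partial S|\ge 3\) whenever \(S\) is proper and nonempty. Since \(|V(G)|=112\) is even, an odd \(S\) is never all of \(V(G)\), so \(G\) is a connected 3-graph.

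\emph{Step 2: \(\mathcal H_3\) is infinite.} Suppose, for contradiction, that \(\mathcal H_3\) were finite. I would invoke the result of~\cite{MaEtAl2025} stating that if \(\mathcal H_3\) is finite then it consists of the Petersen graph alone. I am fairly confident the paper proves a dichotomy of this kind, namely that the Petersen Coloring Conjecture (for 3-graphs) is equivalent to \(\mathcal H_3=\{\Petersen\}\) and that otherwise \(\mathcal H_3\) is infinite. Then every connected 3-graph, in particular \(G\), would be \(\Petersen\)-colorable. This contradicts Theorem~\ref{thm:main}, so \(\mathcal H_3\) is infinite. Locating and stating precisely this dichotomy in~\cite{MaEtAl2025} is the main obstacle. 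If their theorem is phrased only as ``a finite 3-complete set exists iff \(\Petersen\) colors every 3-graph'', the argument is the same: a finite 3-complete set would contain a finite \(\mathcal H_3\), and the dichotomy yields the contradiction.

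\emph{Step 3: infinitely many simple bridgeless counterexamples.} Here I would argue that every member \(H\ne \Petersen\) of \(\mathcal H_3\) has no Petersen coloring. Indeed, \(H\) is a connected 3-graph. Colorings compose, so if \(\Petersen\) colored \(H\), then every graph colored by \(H\) would be colored by \(\Petersen\), which lies in \(\mathcal H_3\). Then \(\mathcal H_3\setminus\{H\}\) would still be 3-complete, contradicting minimality. (We also need \(\Petersen\in\mathcal H_3\); this holds because \(\Petersen\) is colored only by itself among connected 3-graphs, a fact I would cite from~\cite{MaEtAl2025}.) Members of \(\mathcal H_3\) are connected 3-graphs, hence bridgeless and loopless. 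They may have parallel edges, however, so I must ensure infinitely many are simple.

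To secure simplicity, I would use the result from~\cite{MaEtAl2025} that members of \(\mathcal H_3\) other than small exceptions are simple, or that they are cyclically 4-edge-connected. If that is unavailable, I would fall back on an explicit construction. Replace a vertex of \(G\) by a triangle, or substitute a copy of \(F\) for an edge in a way that preserves the absence of Petersen colorings. The direct fallback is to take the 2-edge-cut sum (dot product) of \(G\) with arbitrarily many copies of the Petersen graph. This needs an argument that a Petersen coloring of the sum restricts to one of \(G\), which I would regard as the harder fallback, so I would rely primarily on the simplicity statement in~\cite{MaEtAl2025}.
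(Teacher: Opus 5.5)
You handle the second assertion correctly, but the first assertion, which is the one that requires simplicity, is not proved. Your Steps 1 and 2 match the paper: it likewise notes that connected bridgeless cubic graphs are 3-graphs and cites Theorem~1.1 of \cite{MaEtAl2025}, which says $\mathcal H_3=\{\Petersen\}$ exactly when the conjecture holds and $\mathcal H_3$ is infinite otherwise.

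The gap is in Step 3, the only place the word \emph{simple} matters. Members of $\mathcal H_3$ are 3-graphs and may a priori have parallel edges. Your observation that every $H\in\mathcal H_3\setminus\{\Petersen\}$ has no Petersen coloring is correct once $\Petersen\in\mathcal H_3$ is cited. But it only yields infinitely many bridgeless cubic \emph{multigraphs} without a Petersen coloring. You do not identify the simplicity theorem you intend to cite, and the paper does not use one. Each fallback omits its essential claim, namely that the operation preserves the absence of a Petersen coloring.

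The paper closes the gap with a different result, Theorem~4.8(i) of \cite{MaEtAl2025}. It says that if the conjecture fails, any set of connected 3-graphs coloring every connected \emph{simple} 3-graph is infinite. Finitely many simple counterexamples together with $\Petersen$ would form a finite such set, a contradiction.

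Your triangle fallback also works and is elementary:
\begin{itemize}
\item Suppose $\varphi$ is a Petersen coloring of the graph obtained from a cubic graph by replacing a vertex $v$ by a triangle $abc$. Let $X_a,X_b,X_c$ be the target vertices of the three stars.
\item They are not pairwise distinct, since $\Petersen$ has girth five.
\item If exactly two coincide, say $X_a=X_b\ne X_c$, then $\varphi(ac)=\varphi(bc)=X_aX_c$, contradicting bijectivity at $c$.
\item Hence all three equal some $X$. The triangle edges receive the three edges at $X$, and therefore so do the three outgoing edges.
\item Contracting the triangle gives a Petersen coloring of the original graph.
\end{itemize}
Truncation preserves simplicity, connectivity and bridgelessness. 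Iterating it on $G$ therefore gives simple connected bridgeless cubic counterexamples of every even order $\ge 112$.

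Your own $\mathcal H_3$ route can also be repaired, at least in sketch. If some $H\in\mathcal H_3$ had a digon, its digon contraction $H'$ (again a connected 3-graph) colors $H$. Minimality of $\mathcal H_3$ then forces $H$ to color $H'$. One then checks that $(\mathcal H_3\setminus\{H\})\cup\{H'\}$ is a second minimal 3-complete set, contradicting uniqueness.
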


\begin{proof}
A connected bridgeless cubic graph is a 3-graph: every cut of an odd vertex
set has odd size, and bridgelessness excludes a cut of size one.  Thus
Theorem~\ref{thm:main} disproves the Petersen Coloring Conjecture within the
class considered in \cite{MaEtAl2025}.  Their Theorem~4.8(i) says that, when
the conjecture is false, any set of connected 3-graphs that colors every
connected simple 3-graph is infinite.  If there were only finitely many
connected simple 3-graphs without a Petersen coloring, then those graphs,
together with \(\Petersen\), would form such a finite set: \(\Petersen\) colors
every noncounterexample, and each counterexample colors itself by the identity.
This is a contradiction, so there are infinitely many connected simple
3-graphs without a Petersen coloring.  Every connected 3-graph is bridgeless:
a bridge in a cubic graph separates an odd vertex set with a one-edge cut.

Their Theorem~1.1 identifies the conjecture with
\(\mathcal H_3=\{\Petersen\}\) and proves that otherwise \(\mathcal H_3\) is
infinite.
\end{proof}

\section{An additional nonisomorphic symmetric counterexample}\label{sec:d3}

This section gives a second counterexample \(H\), nonisomorphic to \(G\).

Let \(Q\) be the 36-vertex four-pole whose semiedges, labeled \(0,1,2,3\),
are incident respectively with vertices \(2,32,24,16\).  Its internal edge set is
\[
\begin{split}
E(Q)=\bigl\{&\{0,1\},\{0,5\},\{1,2\},\{1,6\},\{2,7\},\{3,5\},\{3,6\},\\
&\{4,6\},\{4,7\},\{5,7\},\{8,9\},\{8,10\},\{8,11\},\{9,14\},\\
&\{10,12\},\{11,28\},\{12,13\},\{12,17\},\{13,14\},\{13,18\},\{14,19\},\\
&\{15,17\},\{15,18\},\{15,22\},\{16,18\},\{16,19\},\{17,19\},\{20,21\},\\
&\{20,25\},\{21,22\},\{21,26\},\{22,27\},\{23,25\},\{23,26\},\{23,30\},\\
&\{24,26\},\{24,27\},\{25,27\},\{28,29\},\{28,33\},\{29,30\},\{29,34\},\\
&\{30,35\},\{31,33\},\{31,34\},\{32,34\},\{32,35\},\{33,35\},\{0,31\},\\
&\{3,11\},\{4,9\},\{10,20\}\bigr\}.
\end{split}
\]

Take copies \(Q_0,Q_1,Q_2\), indices modulo three, a center \(c\), and leaves
\(\ell_0,\ell_1,\ell_2\).  For every \(i\), join
\[
 Q_i.0\sim Q_{i+1}.3,\qquad
 Q_i.2\sim\ell_i\sim Q_{i+1}.1,\qquad
 c\sim\ell_i.
\]

\begin{table}[ht]
\centering
\caption{The twelve joins defining the additional graph \(H\).}
\label{tab:Hjoins}
\small
\begin{tabular}{llll}
\toprule
Join family & \(i=0\) & \(i=1\) & \(i=2\)\\
\midrule
Direct cycle
  & \(Q_0.0\!\sim\!Q_1.3\)
  & \(Q_1.0\!\sim\!Q_2.3\)
  & \(Q_2.0\!\sim\!Q_0.3\)\\
Leaf exit
  & \(Q_0.2\!\sim\!\ell_0\)
  & \(Q_1.2\!\sim\!\ell_1\)
  & \(Q_2.2\!\sim\!\ell_2\)\\
Leaf entry
  & \(\ell_0\!\sim\!Q_1.1\)
  & \(\ell_1\!\sim\!Q_2.1\)
  & \(\ell_2\!\sim\!Q_0.1\)\\
Central claw
  & \(c\!\sim\!\ell_0\)
  & \(c\!\sim\!\ell_1\)
  & \(c\!\sim\!\ell_2\)\\
\bottomrule
\end{tabular}
\end{table}

In the supplied labeling, \(Q_i\) occupies \(36i,\ldots,36i+35\),
\(c=108\), and \(\ell_i=109+i\).

\begin{figure}[ht]
\centering
\includegraphics[width=0.94\textwidth]{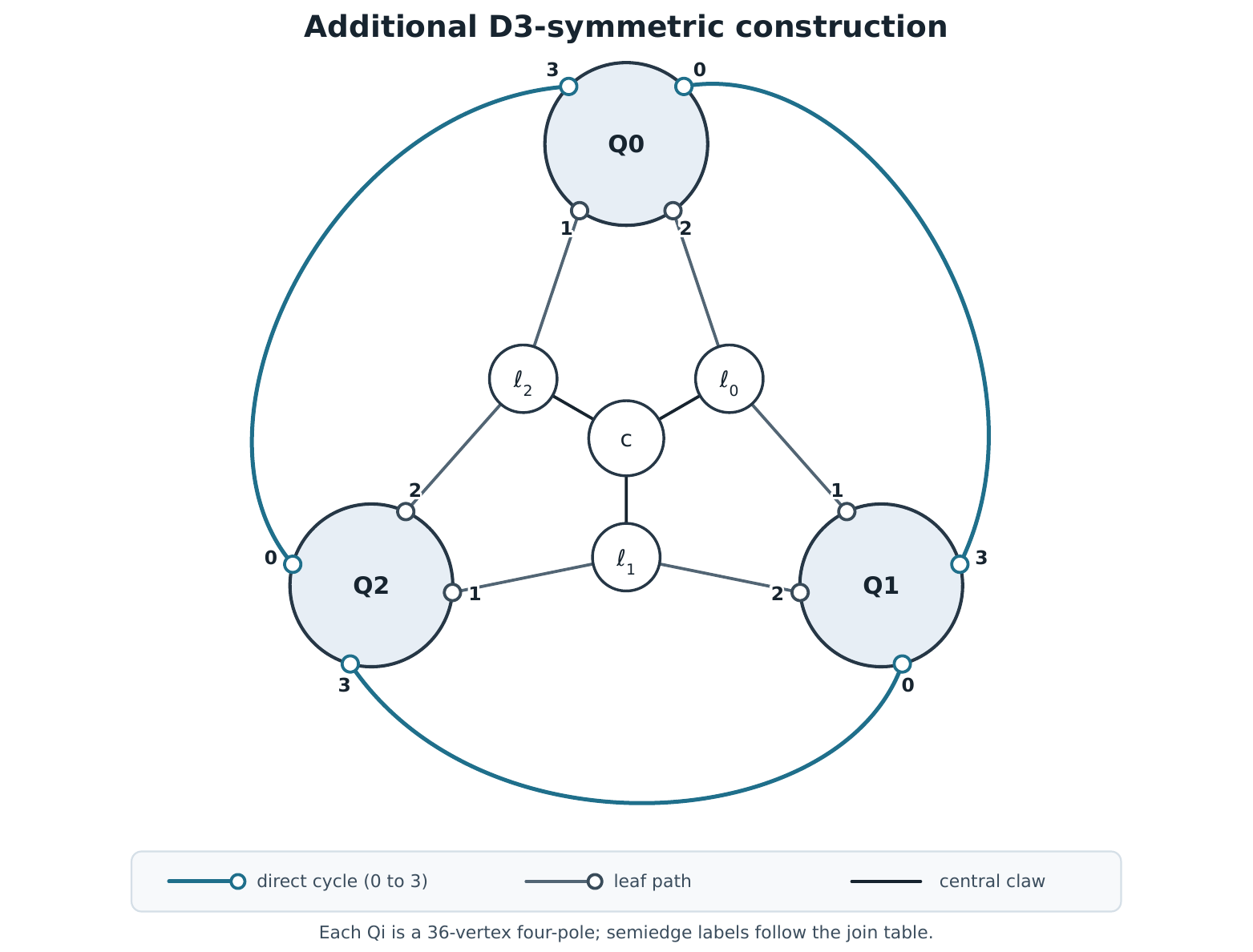}
\caption{Schematic three-copy construction of \(H\); Table~\ref{tab:Hjoins}
specifies the twelve joins.}
\label{fig:d3}
\end{figure}

\begin{theorem}[Alternative counterexample]\label{thm:d3}
The graph \(H\) is a simple connected bridgeless cubic graph on 112 vertices
and 168 edges, with girth five; its edge-connectivity and vertex-connectivity
are both three.  It has no
Petersen coloring and no normal 5-edge-coloring.  Its full automorphism
group is \(D_3\) of order six, and it is nonisomorphic to the graph in
Theorem~\ref{thm:main}.  The SHA-256 digest of its normalized sorted edge list is
\[
\sha{0f2d8858110c6f012de7ddffa92fdbc709d7da630f199b0e3c81bb56eb6b35c7}.
\]
\end{theorem}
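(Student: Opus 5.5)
The proof of Theorem~\ref{thm:d3} will follow the template used for Theorem~\ref{thm:main}: a structural lemma checked by script, then two certified SAT results, plus automorphism and nonisomorphism checks that are specific to \(H\). First, build \(H\) from the edge list of \(Q\) and the twelve joins in Table~\ref{tab:Hjoins}, using the stated labeling (\(Q_i\) on \(36i,\dots,36i+35\), \(c=108\), \(\ell_i=109+i\)). Serialize the result exactly as described after Theorem~\ref{thm:main} and compare its SHA-256 digest with the stated one.

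The structural claims then go as in Lemma~\ref{lem:structure}.
\begin{itemize}
\item Count the edges (\(3\cdot 52+12=168\), all distinct and nonloop) and check that every vertex has degree three.
\item Check connectivity and bridgelessness, and compute the girth by breadth-first search from every root.
\item Exhaust all removals of at most two edges and at most two vertices to show none disconnects \(H\). Since \(H\) is cubic, both connectivities are then exactly three.
\end{itemize}
A useful sanity check is that \(Q\) contains a copy of \(F\) on vertices \(0,\dots,7\), matching the \(E(F)\) list. This is not logically needed.

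Non-colorability is handled exactly as in Section~\ref{sec:sat}. The same encoders, applied to the 168-edge list of \(H\), produce the Petersen CNF and the normal 5-edge-coloring CNF with their four symmetry units. By Lemma~\ref{lem:semantics}, each formula is satisfiable if and only if the corresponding coloring exists. CaDiCaL should return UNSAT on both, and \texttt{drat-trim} verification of the two DRAT proofs then gives that \(H\) has no Petersen coloring and no normal 5-edge-coloring. Proposition~\ref{prop:equivalence} provides redundancy between the two, but I would certify both directly, as the paper does for \(G\).

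For the symmetry claims, I would do two things. First, exhibit the \(D_3\) action explicitly. The rotation \(Q_i\mapsto Q_{i+1}\), \(\ell_i\mapsto\ell_{i+1}\), fixing \(c\), is an automorphism directly from the joins. A reflection needs an explicit vertex map that reverses the cyclic order; it will likely use an internal symmetry of \(Q\) exchanging the roles of semiedges \(0\leftrightarrow3\) and \(1\leftrightarrow2\), and it must be verified edge by edge. Second, show there is nothing more. Compute the full automorphism group with a canonical-labeling tool, or by partition refinement plus backtracking, and confirm its order is \(6\). Nonisomorphism with \(G\) then follows from a canonical form comparison, or more cheaply from a distinguishing invariant: the orders of the automorphism groups if they differ, or a count of 5-cycles or 6-cycles through each vertex.

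The main obstacle is the group computation, specifically proving that \(|\mathrm{Aut}(H)|\) is exactly six rather than merely at least six. I would first try vertex invariants such as the number of 5-cycles through each vertex and the distance profile to \(c\). If these fix \(c\) and split the remaining vertices into orbits of size three, the stabilizer of a suitable base vertex is trivial and the order bound follows by the orbit–stabilizer theorem. If they do not, I would fall back on a certified run of \texttt{nauty} or an equivalent independent canonical-labeling check.
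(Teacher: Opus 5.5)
This is essentially the paper's argument, which is also purely computational: it rebuilds $H$ from the displayed rule, checks the graph properties, computes $|\mathrm{Aut}(H)|=6$ and $|\mathrm{Aut}(G)|=1$ (so nonisomorphism follows from the differing orders), and certifies both direct CNFs unsatisfiable with \texttt{drat-trim}-checked DRAT proofs. Your explicit rotation and reflection are a useful addition, since they identify the order-six group as $D_3$ rather than $\mathbb{Z}_6$, and the reflection you predict does exist: an involution of $Q$ that fixes the claw center and one claw leaf, swaps its $F$-copies in pairs, and exchanges semiedges $0\leftrightarrow3$ and $1\leftrightarrow2$. However, your orbit--stabilizer heuristic should look for invariant classes of size six with trivial stabilizer, because under this $D_3$ most vertex orbits have size six, not three.
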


\begin{proof}[Computational verification]\leavevmode\par\noindent
The verification program reconstructs the exact edge list from the displayed
rule, checks its graph properties, and computes the full automorphism groups
of \(H\) and the main graph, obtaining orders six and one.  Both direct CNFs of
Section~\ref{sec:sat} were generated from the complete edge list of \(H\).
CaDiCaL returned UNSAT for both, and \texttt{drat-trim} verified both DRAT
proofs.  The results are given in Table~\ref{tab:d3certs}.
\end{proof}

\begin{center}
\refstepcounter{table}\label{tab:d3certs}
\textbf{Table \thetable: Direct SAT and certificate results for \(H\).}\par\smallskip
\footnotesize
\begin{tabular}{@{}lrrll@{}}
\toprule
Encoding & Variables & Clauses & Solver result & Proof check\\
\midrule
Petersen & 3640 & 68324 & \code{UNSATISFIABLE} & \code{VERIFIED}\\
Normal 5-edge-coloring & 1960 & 25484 & \code{UNSATISFIABLE} & \code{VERIFIED}\\
\bottomrule
\end{tabular}
\end{center}

\section{Additional verification controls}\label{sec:audit}

The certificate runner accepts an UNSAT result only when CaDiCaL exits with
code 20 and prints the whole line \code{s UNSATISFIABLE}; it accepts a proof
only when \texttt{drat-trim} exits with code 0 and prints the whole line
\code{s VERIFIED}.  A supplied negative control exits successfully while
printing \code{s NOT VERIFIED}, and the runner rejects it.

As a satisfiable control, the same generators encode the Petersen graph.
CaDiCaL returned SAT in both formulations, and the returned assignments were
checked against every generated clause.  The Petersen-coloring and normal
5-edge-coloring encodings use separate variables and local constraints, and
each graph--encoding pair has its own CNF and checked DRAT proof.

The ancillary documentation lists the relevant files and reproduction steps,
and the compact archive has its own SHA-256 manifest.

\section*{Computational provenance and responsibility}

OpenAI language-model systems were used extensively in the discovery,
computational search, verification, and preparation of this work.  The author
reviewed the final claims and artifacts and accepts responsibility for the
contents.

\clearpage
\section*{Artifact availability}

The frozen version-1.1.0 Zenodo record corresponding to this manuscript is
available at the immutable version DOI
\href{https://doi.org/10.5281/zenodo.21845291}{10.5281/zenodo.21845291}
\cite{PutmanZenodo2026}.  It contains the manuscript and source files, a compact
ancillary archive with the exact graphs, construction scripts, SAT encodings,
verification software, completed logs, and manifests, and a separate archive
containing the four DRAT proof certificates.  Reproduction instructions and
exact SHA-256 digests for all artifacts are provided in the accompanying
documentation.

\clearpage
\appendix
\section{Complete labeled edge list}\label{app:edges}

The vertex set is \(\{0,1,\ldots,111\}\).  Each unordered pair below is one
edge; the list contains 168 distinct pairs.

\begin{center}
\scriptsize
\renewcommand{\arraystretch}{0.96}
\begin{longtable}{@{}lll@{}}
\toprule
Edges 1--56 & Edges 57--112 & Edges 113--168\\
\midrule
\endhead
\bottomrule
\endlastfoot
$0\!\!-\!\!1$ & $36\!\!-\!\!37$ & $72\!\!-\!\!74$ \\
$0\!\!-\!\!5$ & $36\!\!-\!\!38$ & $72\!\!-\!\!75$ \\
$0\!\!-\!\!31$ & $36\!\!-\!\!39$ & $72\!\!-\!\!108$ \\
$1\!\!-\!\!2$ & $37\!\!-\!\!42$ & $73\!\!-\!\!75$ \\
$1\!\!-\!\!6$ & $38\!\!-\!\!64$ & $76\!\!-\!\!77$ \\
$2\!\!-\!\!7$ & $38\!\!-\!\!92$ & $76\!\!-\!\!81$ \\
$2\!\!-\!\!100$ & $39\!\!-\!\!78$ & $76\!\!-\!\!107$ \\
$3\!\!-\!\!5$ & $40\!\!-\!\!41$ & $77\!\!-\!\!78$ \\
$3\!\!-\!\!6$ & $40\!\!-\!\!45$ & $77\!\!-\!\!82$ \\
$3\!\!-\!\!11$ & $40\!\!-\!\!71$ & $78\!\!-\!\!83$ \\
$4\!\!-\!\!6$ & $41\!\!-\!\!42$ & $79\!\!-\!\!81$ \\
$4\!\!-\!\!7$ & $41\!\!-\!\!46$ & $79\!\!-\!\!82$ \\
$4\!\!-\!\!9$ & $42\!\!-\!\!47$ & $79\!\!-\!\!87$ \\
$5\!\!-\!\!7$ & $43\!\!-\!\!45$ & $80\!\!-\!\!82$ \\
$8\!\!-\!\!9$ & $43\!\!-\!\!46$ & $80\!\!-\!\!83$ \\
$8\!\!-\!\!10$ & $43\!\!-\!\!51$ & $80\!\!-\!\!85$ \\
$8\!\!-\!\!11$ & $44\!\!-\!\!46$ & $81\!\!-\!\!83$ \\
$9\!\!-\!\!14$ & $44\!\!-\!\!47$ & $84\!\!-\!\!85$ \\
$10\!\!-\!\!12$ & $44\!\!-\!\!49$ & $84\!\!-\!\!86$ \\
$10\!\!-\!\!20$ & $45\!\!-\!\!47$ & $84\!\!-\!\!87$ \\
$11\!\!-\!\!28$ & $48\!\!-\!\!49$ & $85\!\!-\!\!90$ \\
$12\!\!-\!\!13$ & $48\!\!-\!\!50$ & $86\!\!-\!\!88$ \\
$12\!\!-\!\!17$ & $48\!\!-\!\!51$ & $86\!\!-\!\!96$ \\
$13\!\!-\!\!14$ & $49\!\!-\!\!54$ & $87\!\!-\!\!104$ \\
$13\!\!-\!\!18$ & $50\!\!-\!\!52$ & $88\!\!-\!\!89$ \\
$14\!\!-\!\!19$ & $50\!\!-\!\!60$ & $88\!\!-\!\!93$ \\
$15\!\!-\!\!17$ & $51\!\!-\!\!68$ & $89\!\!-\!\!90$ \\
$15\!\!-\!\!18$ & $52\!\!-\!\!53$ & $89\!\!-\!\!94$ \\
$15\!\!-\!\!22$ & $52\!\!-\!\!57$ & $90\!\!-\!\!95$ \\
$16\!\!-\!\!18$ & $53\!\!-\!\!54$ & $91\!\!-\!\!93$ \\
$16\!\!-\!\!19$ & $53\!\!-\!\!58$ & $91\!\!-\!\!94$ \\
$16\!\!-\!\!56$ & $54\!\!-\!\!59$ & $91\!\!-\!\!98$ \\
$17\!\!-\!\!19$ & $55\!\!-\!\!57$ & $92\!\!-\!\!94$ \\
$20\!\!-\!\!21$ & $55\!\!-\!\!58$ & $92\!\!-\!\!95$ \\
$20\!\!-\!\!25$ & $55\!\!-\!\!62$ & $93\!\!-\!\!95$ \\
$21\!\!-\!\!22$ & $56\!\!-\!\!58$ & $96\!\!-\!\!97$ \\
$21\!\!-\!\!26$ & $56\!\!-\!\!59$ & $96\!\!-\!\!101$ \\
$22\!\!-\!\!27$ & $57\!\!-\!\!59$ & $97\!\!-\!\!98$ \\
$23\!\!-\!\!25$ & $60\!\!-\!\!61$ & $97\!\!-\!\!102$ \\
$23\!\!-\!\!26$ & $60\!\!-\!\!65$ & $98\!\!-\!\!103$ \\
$23\!\!-\!\!30$ & $61\!\!-\!\!62$ & $99\!\!-\!\!101$ \\
$24\!\!-\!\!26$ & $61\!\!-\!\!66$ & $99\!\!-\!\!102$ \\
$24\!\!-\!\!27$ & $62\!\!-\!\!67$ & $99\!\!-\!\!106$ \\
$24\!\!-\!\!39$ & $63\!\!-\!\!65$ & $100\!\!-\!\!102$ \\
$25\!\!-\!\!27$ & $63\!\!-\!\!66$ & $100\!\!-\!\!103$ \\
$28\!\!-\!\!29$ & $63\!\!-\!\!70$ & $101\!\!-\!\!103$ \\
$28\!\!-\!\!33$ & $64\!\!-\!\!66$ & $104\!\!-\!\!105$ \\
$29\!\!-\!\!30$ & $64\!\!-\!\!67$ & $104\!\!-\!\!109$ \\
$29\!\!-\!\!34$ & $65\!\!-\!\!67$ & $105\!\!-\!\!106$ \\
$30\!\!-\!\!35$ & $68\!\!-\!\!69$ & $105\!\!-\!\!110$ \\
$31\!\!-\!\!33$ & $68\!\!-\!\!73$ & $106\!\!-\!\!111$ \\
$31\!\!-\!\!34$ & $69\!\!-\!\!70$ & $107\!\!-\!\!109$ \\
$32\!\!-\!\!34$ & $69\!\!-\!\!74$ & $107\!\!-\!\!110$ \\
$32\!\!-\!\!35$ & $70\!\!-\!\!75$ & $108\!\!-\!\!110$ \\
$32\!\!-\!\!37$ & $71\!\!-\!\!73$ & $108\!\!-\!\!111$ \\
$33\!\!-\!\!35$ & $71\!\!-\!\!74$ & $109\!\!-\!\!111$ \\

\end{longtable}
\end{center}

\section{Exact target numbering}\label{app:numbering}

For the Petersen encoding, target vertices are the lexicographically ordered
pairs
\[
(0,1),(0,2),(0,3),(0,4),(1,2),(1,3),(1,4),(2,3),(2,4),(3,4),
\]
and target edges are the lexicographically ordered pairs of disjoint target
vertices.  Source edges are the lexicographically ordered pairs in
Appendix~\ref{app:edges}.  The 120 target permutations are induced by all
permutations of \(\{0,1,2,3,4\}\).  The normal 5-edge-coloring encoding orders
its ten missing pairs lexicographically.  The archived JSON variable maps
specify these conventions exactly.

\end{document}